\DeclareMathOperator{\codim}{codim}
\DeclareMathOperator*{\E}{{\mathsf E}}
\newcommand{\alp}{\alpha}
\newcommand{\del}{\delta}
\newcommand{\Lam}{\Lambda}
\newcommand{\F}{\mathbb F}
\newcommand{\tf}{{\tilde f}}
\newcommand{\og}{{\bar g}}
\newcommand{\cS}{{\mathcal S}}
\newcommand{\wt}{\widetilde}
\newcommand{\longc}{,\ldots,}
\newcommand{\longe}{=\ldots=}
\newcommand{\longp}{+\dotsb+}
\newcommand{\sub}[1]{{_{\substack{#1}}}}
\newcommand{\seq}{\subseteq}
\newcommand{\stm}{\setminus}
\newtheorem{claim}{Claim}
\newtheorem{lemma}{Lemma}
\newtheorem{theorem}{Theorem}
\newtheorem{corollary}{Corollary}
\newtheorem{proposition}{Proposition}
\newcommand{\refl}[1]{\ref{l:#1}}
\newcommand{\reft}[1]{\ref{t:#1}}
\newcommand{\refc}[1]{\ref{c:#1}}
\newcommand{\refp}[1]{\ref{p:#1}}
\newcommand{\refs}[1]{\ref{s:#1}}
\newcommand{\refb}[1]{\cite{b:#1}}
\newcommand{\refe}[1]{\eqref{e:#1}}
\author{Vsevolod F. Lev}
\address{Department of Mathematics, The University of Haifa at Oranim,
  Tivon 36006, Israel}
\email{seva@math.haifa.ac.il}
\title[Character-free approach to progression-free sets]%
      {Character-free approach \\ to progression-free sets}
\begin{document}
\baselineskip=16pt

\maketitle

\begin{abstract}
We present an elementary combinatorial argument showing that the density of a
progression-free set in a finite $r$-dimensional vector space is $O(1/r)$.
\end{abstract}

\section{Introduction}
A set $A$ of elements of an abelian group is called \emph{progression-free}
if for any $a,b,c\in A$ with $a+c=2b$ one has $a=c$. In \refb{m} Meshulam
proved that for any progression-free subset $A$ of a finite abelian group $G$
of odd order and rank $r\ge 1$, there exist a subgroup $G_0<G$ of rank at
least $r-1$ and a progression-free subset $A_0\seq G_0$ such that the
densities $\alp:=|A|/|G|$ and $\alp_0:=|A_0|/|G_0|$ satisfy
  $$ \alp_0 \ge \frac{\alp-|G|^{-1}}{1-\alp}. $$
Using straightforward induction one easily derives that the density of a
progression-free subset of a finite abelian group of odd order and rank $r$
is $O(1/r)$.

The argument of \refb{m} relies on Fourier analysis, considered now the
standard tool for obtaining estimates of this sort. In this note we introduce
a completely elementary approach, allowing us to establish Meshulam's result
in a purely combinatorial way for the particular case of the additive group
of a finite vector space.

\begin{theorem}\label{t:main}
Let $q$ be a power of an odd prime and $r\ge 1$ an integer. If $A\seq\F_q^r$
is a progression-free set of density $\alp:=q^{-r}|A|$, then there exists a
co-dimension $1$ affine subspace of $\F_q^r$ such that the density of $A$ on
this subspace is at least
  $$ \frac{\alp-q^{-r}}{1-\alp}. $$
\end{theorem}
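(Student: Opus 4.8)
The plan is to imitate, by bare-hands counting, the shape of Meshulam's argument: count the three-term progressions in $A$, read off what this forces about how $A$ meets a pencil of parallel affine hyperplanes, and then pull out one heavy hyperplane. I expect the last of these to be the real obstacle.

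\emph{Counting progressions.} For $m\in\F_q^r$ set $P(m):=|A\cap(2m-A)|$; since $2$ is invertible in $\F_q$ this is the number of ordered pairs $(a,c)\in A\times A$ with $a+c=2m$, so $\sum_m P(m)=|A|^2$. If $m\in A$ then any such pair makes $a,m,c$ a progression inside $A$, forcing $a=c=m$; hence $P(m)=1$ on $A$ and therefore
$$ \sum_{m\notin A}P(m)=|A|^2-|A|. $$
Thus the $q^r-|A|$ points outside $A$ receive exactly $|A|^2-|A|$ "reflected" pairs — this is the only consequence of progression-freeness that I plan to use.

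\emph{Slicing by a pencil of hyperplanes.} Fix a nonzero linear functional $\phi\colon\F_q^r\to\F_q$ and put $a_t:=|A\cap\phi^{-1}(t)|$, so $\sum_t a_t=|A|$ and $0\le a_t\le q^{r-1}$. Sorting the pairs of the previous paragraph by the value $\phi(a)$, and again using that $2$ is invertible, gives for every $t\in\F_q$
$$ \sum_{m\in\phi^{-1}(t)}P(m)=\sum_{s\in\F_q}a_s a_{2t-s}=:c_t, $$
whence $0\le c_t-a_t=\sum_{m\in\phi^{-1}(t)\stm A}P(m)$ and $\sum_t c_t=|A|^2$. Restricting the hypothesis to slices adds, for $k\ne l$, that $\tfrac12\bigl((A\cap\phi^{-1}(k))+(A\cap\phi^{-1}(l))\bigr)$ avoids $A$ and sits inside the single hyperplane $\phi^{-1}(\tfrac{k+l}{2})$, so its cardinality is at most $q^{r-1}-a_{(k+l)/2}$; combined with a lower bound for the sumset this turns the above into genuine quantitative constraints on the profile $(a_t)_{t\in\F_q}$.

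\emph{Extracting a heavy hyperplane.} It remains to choose $\phi$ so that some level $t$ satisfies $a_t\ge(|A|-1)q^{r-1}/(q^r-|A|)$; then $\phi^{-1}(t)$ is a co-dimension $1$ affine subspace on which $A$ has density $a_t/q^{r-1}\ge(|A|-1)/(q^r-|A|)=(\alp-q^{-r})/(1-\alp)$, which is the assertion. The extraction itself is painless: with $M:=\max_t a_t$ and $b_t:=M-a_t\ge0$ one has $\sum_t b_t=qM-|A|$, so any estimate of the form "the profile $(a_t)_t$ is $L$ away from flat" yields $qM-|A|\ge L$, i.e.\ $M\ge(|A|+L)/q$, and the value $L=(|A|^2-q^r)/(q^r-|A|)$ makes $(|A|+L)/q$ collapse to exactly the bound wanted. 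The crux — and the point of a character-free treatment — is to produce such an $L$: one must show that progression-freeness genuinely forces this much deviation from flat in some direction $\phi$. The Fourier proof gets this for nothing, because the progression count $|A|$ lies far below the value $q^{-r}|A|^3$ a random set of the same size would give and that whole gap is carried by the non-trivial Fourier coefficients, one of which must therefore be large. A character-free argument has to replace this cancellation by an honest inequality, presumably a convexity estimate applied to $\sum_t c_t=|A|^2$ and the slice constraints, together with a careful choice of $\phi$ to land the constant precisely; that is the step I would expect to take most of the work.
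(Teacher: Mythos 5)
You have correctly identified where the difficulty lies, but the proposal stops exactly there, so what is written is a plan rather than a proof. The parts you do carry out --- the count $\sum_{m\notin A}P(m)=|A|^2-|A|$ (equivalently, $\Lam_{x-2y+z=0}[1_A]=q^{-r}\alp$, which is also the only use the paper makes of progression-freeness) and the closing arithmetic showing that a slice of size $(|A|+L)/q$ with $L=(|A|^2-q^r)/(q^r-|A|)$ has the advertised density --- are the trivial bookends. The first missing mechanism is the choice of direction: for a fixed functional $\phi$ nothing forces the profile $(a_t)$ away from flat, since progression-freeness constrains $A$ globally and not slice-by-slice in a prescribed direction, so $\phi$ must be produced by an averaging over all directions. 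The paper does this with an exact identity (Lemma~\refl{identity}): for any equation $E$ with coefficients coprime to $q$ one has $\Lam_E[\tf]=\sum_V\Lam_E[\tf|V]$, the sum running over all $(q^r-1)/(q-1)$ co-dimension $1$ subspaces; it is proved by double-counting subspaces through a point, and, applied both to $x-2y+z=0$ and to $x-y=0$ (the latter giving the Parseval surrogate $\|\tf\|_2^2=\sum_V\|\tf|V\|_2^2$), it yields by pigeonhole a single $V$ with $\Lam_E[\tf|V]\le\bigl(\Lam_E[\tf]/\|\tf\|_2^2\bigr)\|\tf|V\|_2^2$. Nothing in your sketch plays this role.

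The second gap is independent of the first: even in a well-chosen direction, a lower bound on how far $(a_t)$ deviates from flat does not by itself produce a \emph{heavy} coset, since the deviation could in principle be carried by unusually light cosets; the sign of the discrepancy matters. Your reduction to the estimate ``$qM-|A|\ge L$'' is circular, because that inequality is literally the desired conclusion $M\ge(|A|+L)/q$ restated, not a deviation bound one could extract from the progression count. The paper resolves this with the positivity trick of Claim~\refc{squares-cubes}: averaging $(\tf(g_1)-\tf(g_2))^2(M-\tf(g_3))$ over solutions of $x-2y+z=0$ and using that $\Lam_{x-2y+z=0}[\wt1_A]=-\alp(\alp^2-N^{-1})$ is \emph{negative} --- i.e.\ that the progression count falls below the random count --- forces $\max\tf\ge M$ on the chosen slice. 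Both of these steps would have to be supplied before your argument closes; as it stands, the convexity estimate you gesture at in the final sentence is the theorem.
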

We remark that if $q,r,A$, and $\alp$ are as in Theorem \reft{main}, and if
$g\in\F_q^r$ and $V<\F_q^r$ is a linear subspace of co-dimension $1$ such
that the affine subspace $g+V$ satisfies the conclusion of the theorem, then
the density of $A$ on $g+V$ is $\alp_0:=|A\cap(g+V)|/|V|$. Hence, if we let
$A_0:=(A-g)\cap V$, then $A_0$ is a progression-free subset of $V$ of density
$\alp_0\ge(\alp-q^{-r})/(1-\alp)$.

We set up the necessary notation in the next section and prove Theorem
\reft{main} in Section \refs{proof}. We admit that our proof is, in a sense,
parallel to that of Meshulam, and therefore may not count as totally new. We
hope, however, that it can be susceptible to various extensions where using
characters is impossible or wasteful.

\section{Preliminaries}

Recall that the averaging operator on a finite non-empty set $S$ is defined
by
  $$ \E_{s\in S}f(s) := \frac1{|S|}\sum_{s\in S} f(s), $$
where $f$ is a real-valued function on $S$. Occasionally, we use
abbreviations as $\E_S(f)$, or just $\E f$, whenever the range is implicit
from the context. By $1_S$ we denote the indicator function of $S$.

For the rest of this section we assume that $f$ is a real-valued function on
a finite abelian group $G$. The $L^2$-norm of $f$ is
  $$ \|f\|_2^2 := \E(f^2), $$
and the balanced part of $f$ is
  $$ \tf := f - \E f, $$
where averaging extends onto the whole group $G$.

Given a linear homogeneous equation $E$ in $k$ variables with integer
coefficients, by $\cS_G(E)$ we denote the solution set of $E$ is
$G^k:=G\times\dotsb\times G$ ($k$ factors), and we let
  $$ \Lam_E[f] := \E_{(g_1\longc g_k)\in\cS_G(E)} f(g_1)\dotsb f(g_k); $$
thus, for instance,
\begin{equation}\label{e:Lam-x-y}
  \Lam_{x-y=0}[f] = \E_{(g,g)\in G\times G} f^2(g) = \|f\|_2^2.
\end{equation}
A basic observation is that if all coefficients of $E$ are co-prime with the
order of the group $G$, then fixing any $k-1$ coordinates of a $k$-tuple in
$G^k$ there is a unique way to choose the remaining coordinate so that the
resulting $k$-tuple falls into $\cS_G(E)$. Consequently, for any real-valued
function $f$ on $G$ and proper non-empty subset $I\subset[k]$ we have
  $$ \E_{(g_1\longc g_k)\in\cS_G(E)} \prod_{i\in I} \tf(g_i) = 0. $$
As a result, if all coefficients of the equation $E$ are co-prime with the
order of the group $G$, then
\begin{equation}\label{e:Lam-alt}
  \Lam_E[\tf] = \Lam_E[f] - (\E f)^k:
\end{equation}
to see this just notice that
\begin{multline*}
  \Lam_E[f] = \E_{(g_1\longc g_k)\in\cS_G(E)} f(g_1)\dotsb f(g_k) \\
       = \sum_{\del_1\longc\del_k=0}^1 (\E f)^{\del_1\longp\del_k}
            \cdot \E_{(g_1\longc g_k)\in\cS_G(E)}
                 \prod_{1\le i\le k \colon \del_i=0} \tf(g_i),
\end{multline*}
and that, by the observation just made, the quantity
  $$ \E_{(g_1\longc g_k)\in\cS_G(E)}
                             \prod_{1\le i\le k \colon \del_i=0} \tf(g_i) $$
vanishes, unless $\del_1\longe\del_k$.

Let $f$ be a real-valued function on a finite abelian group $G$. For a
subgroup $H\le G$ we denote by $f|H$ the function on the quotient group
$G/H$, defined by
  $$ f|H\colon g+H \mapsto \E_{g+H} f. $$
We note that $f|\{0\}=f$, while $f|G=\E f$ is a constant function (on the
trivial group), and that $\E(f|H)=\E f$; furthermore, it is easily verified
that $\tf|H=\widetilde{f|H}$.

\section{Proof of Theorem \reft{main}}\label{s:proof}

We are now ready to prove Theorem \reft{main}. In the heart of our argument
is the identity, established in the following lemma.
\begin{lemma}\label{l:identity}
Let $q$ be a prime power, $r\ge 1$ an integer, and $E$ a homogeneous linear
equation with integer coefficients, co-prime with $q$. If $f$ is a
real-valued function on the vector space $\F_q^r$, then
  $$ \Lam_E[\tf] = \sum_{V<\F_q^r\colon\codim V=1} \Lam_E[\tf|V]. $$
\end{lemma}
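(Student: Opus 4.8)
The plan is to isolate the whole content of the lemma in a single \emph{transfer identity} and then finish by elementary counting. Write $E$ as $c_1x_1\longp c_kx_k=0$ with integer coefficients $c_1\longc c_k$ co-prime to $q$, and set $G:=\F_q^r$. The transfer identity asserts that for \emph{every} real-valued function $h$ on $G$ and every co-dimension-$1$ subspace $V<G$,
\[
  \Lam_E[h|V] = \E_{(w_1\longc w_k)\in G^k\colon\, c_1w_1\longp c_kw_k\in V} h(w_1)\dotsb h(w_k).
\]
Granting this, I would apply it with $h:=\tf$, sum over all co-dimension-$1$ subspaces $V<G$, interchange the two summations, and read off $\Lam_E[\tf]$.

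To prove the transfer identity, unfold the definition of $h|V$. Since $(h|V)(g+V)=\E_{w\in g+V}h(w)$, rewriting each factor of
\[
  \Lam_E[h|V] = \E_{(y_1\longc y_k)\in\cS_{G/V}(E)} (h|V)(y_1)\dotsb(h|V)(y_k)
\]
as an average of $h$ over the coset $y_i\seq G$, and then interchanging products with averages, one sees that $\Lam_E[h|V]$ is the average of $h(w_1)\dotsb h(w_k)$ over all pairs consisting of a solution $(y_1\longc y_k)\in\cS_{G/V}(E)$ together with a choice of representatives $w_i\in y_i$. Such a pair is the same datum as a single tuple $(w_1\longc w_k)\in G^k$ with $(w_1+V\longc w_k+V)\in\cS_{G/V}(E)$, that is, with $c_1w_1\longp c_kw_k\in V$ (and each such tuple is obtained exactly once), which gives the displayed identity. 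No hypothesis on the coefficients is used here: this is just ``averaging over cosets and then over the quotient equals averaging over the whole group'', restricted to the solution set of $E$. Co-primality of the $c_i$ with $q$ enters only in the counting that follows: since each $c_i$ is co-prime to $q$, multiplication by $c_i$ is invertible on $\F_q$-vector spaces, so $(w_1\longc w_k)\mapsto c_1w_1\longp c_kw_k$ is a surjective homomorphism $G^k\to G$ all of whose fibres have size $q^{r(k-1)}$; hence $|\cS_G(E)|=q^{r(k-1)}$, and for every co-dimension-$1$ subspace $V<G$ the index set above has size $q^{r-1}\cdot q^{r(k-1)}=q^{rk-1}$.

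Now let $N:=(q^r-1)/(q-1)$ be the number of co-dimension-$1$ subspaces of $G$, and for $a\in G$ let $\nu(a)$ be the number of them containing $a$; the standard count gives $\nu(0)=N$ and $\nu(a)=(q^{r-1}-1)/(q-1)$ for $a\ne0$, so that $\nu(a)=(q^{r-1}-1)/(q-1)+q^{r-1}\cdot 1_{\{0\}}(a)$ since $N-(q^{r-1}-1)/(q-1)=q^{r-1}$. Applying the transfer identity with $h=\tf$ and interchanging summations,
\[
  \sum_{V<G\colon\,\codim V=1}\Lam_E[\tf|V]
    = \frac1{q^{rk-1}}\sum_{(w_1\longc w_k)\in G^k} \tf(w_1)\dotsb\tf(w_k)\,\nu(c_1w_1\longp c_kw_k).
\]
In the splitting of $\nu$, the term from the constant $(q^{r-1}-1)/(q-1)$ is a multiple of $\big(\sum_{w\in G}\tf(w)\big)^k$, which vanishes because $\sum_{w\in G}\tf(w)=|G|\,\E\tf=0$; the term from $q^{r-1}\cdot 1_{\{0\}}$ is
\[
  \frac{q^{r-1}}{q^{rk-1}}\sum_{(w_1\longc w_k)\in\cS_G(E)}\tf(w_1)\dotsb\tf(w_k)
    = \frac{q^{r-1}\cdot q^{r(k-1)}}{q^{rk-1}}\,\Lam_E[\tf] = \Lam_E[\tf],
\]
which is precisely the left-hand side of the lemma.

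The only step carrying real content is the transfer identity; everything after it is hyperplane counting together with the observation that the ``bulk'' contribution is killed by $\E\tf=0$ --- the character-free counterpart of the vanishing of a Fourier transform at the origin. The part demanding care is matching the several normalizing powers of $q$ --- the sizes of $\cS_{G/V}(E)$ and $\cS_G(E)$, of the coset-products $V^k$, and of the fibre $\{(w_1\longc w_k)\colon c_1w_1\longp c_kw_k\in V\}$ --- all of which come down to co-primality of the coefficients of $E$ with $q$.
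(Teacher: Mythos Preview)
Your argument is correct and follows essentially the same route as the paper's proof: both unfold $\sum_V\Lam_E[\tf|V]$ as a sum over $(w_1\longc w_k)\in G^k$ weighted by the number of hyperplanes containing $c_1w_1\longp c_kw_k$, split that weight into a constant plus $1_{\{0\}}$, and kill the constant part using $\E\tf=0$. Your explicit ``transfer identity'' is exactly the paper's chain of equalities from $\sum_V\Lam_E[\tf|V]$ down to $\E_{g_1\longc g_k\in G}\tf(g_1)\dotsb\tf(g_k)\sum_{V}|G/V|$, packaged as a standalone statement; the only cosmetic difference is that the paper carries the factor $|G/V|=q$ alongside the hyperplane count rather than computing $|\{(w_i)\colon c_1w_1\longp c_kw_k\in V\}|=q^{rk-1}$ directly.
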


\begin{proof}
Denote by $k$ the number of variables in $E$. Writing (for typographical
reasons) $G=\F_q^r$ and agreeing that summation over $V$ extends onto
subspaces $V<\F_q^r$ of co-dimension $1$, we get
\begin{align*}
  \sum_V \Lam_E[\tf|V]
    &= \sum_V \E_{(\og_1\longc\og_k)\in\cS_{G/V}(E)}
                 (\tf|V)(\og_1)\ldots(\tf|V)(\og_k) \\
    &= \sum_V \E_{(\og_1\longc\og_k)\in\cS_{G/V}(E)}
                 \E_{g_1\in\og_1\longc g_k\in\og_k}\tf(g_1)\dotsb\tf(g_k) \\
    &= \sum_V \E_\sub{g_1\longc g_k\in G \\
                          (g_1+V\longc g_k+V)\in\cS_{G/V} (E)}
                                                   \tf(g_1)\dotsb\tf(g_k) \\
    &= \E_{g_1\longc g_k\in G} \tf(g_1)\dotsb\tf(g_k)
         \sum_{V\colon (g_1+V\longc g_k+V)\in\cS_{G/V} (E)} |G/V|.
\end{align*}
We now notice that the number of summands in the inner sum is equal to
$(q^r-1)/(q-1)$ (the total number of co-dimension $1$ subspaces) if
$(g_1\longc g_k)\in\cS_G(E)$, and is equal to $(q^{r-1}-1)/(q-1)$ (the number
of co-dimension $1$ subspaces, containing a fixed non-zero element of $G$) if
$(g_1\longc g_k)\notin\cS_G(E)$. Since $|G/V|=q$ for any subspace $V$ with
$\codim V=1$, we have
  $$ \sum_{V\colon (g_1+V\longc g_k+V)\in\cS_{G/V} (E)} |G/V|
              = \frac{q^r-q}{q-1} + q^r \cdot 1_{\cS_G(E)}(g_1\longc g_k). $$
Hence
  $$ \sum_V \Lam_E[\tf|V]
           = q^r \E_{g_1\longc g_k\in G} \tf(g_1)\dotsb\tf(g_k)
                                        \cdot 1_{\cS_G(E)}(g_1\longc g_k), $$
and it remains to observe that the right-hand side is the definition of
$\Lam_E[\tf]$ in disguise.
\end{proof}

Applying Lemma~\refl{identity} to the equation $x-y=0$ and using
\refe{Lam-x-y}, we obtain
\begin{corollary}\label{c:norm2}
Let $q$ be a prime power and $r\ge 1$ an integer. If $f$ is a real-valued
function on the vector space $\F_q^r$, then
  $$ \|\tf\|_2^2 = \sum_{V<\F_q^r\colon\codim V=1} \|\tf|V\|_2^2. $$
\end{corollary}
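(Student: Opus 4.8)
The plan is to specialize Lemma~\refl{identity} to the equation $E$ given by $x-y=0$, which is legitimate because the coefficients of this equation are $1$ and $-1$, and these are coprime with $q$ (indeed with every integer). Applying the lemma to the balanced function $\tf$ on $\F_q^r$ therefore gives, with no further work,
$$ \Lam_{x-y=0}[\tf] = \sum_{V<\F_q^r\colon\codim V=1} \Lam_{x-y=0}[\tf|V]. $$

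Next I would convert each $\Lam$-term into an $L^2$-norm by means of \refe{Lam-x-y}. That identity was stated for an arbitrary real-valued function on an arbitrary finite abelian group, so it applies verbatim to $\tf$ on $\F_q^r$, yielding $\Lam_{x-y=0}[\tf]=\|\tf\|_2^2$, and equally to each $\tf|V$, which is a real-valued function on the finite abelian group $\F_q^r/V$, yielding $\Lam_{x-y=0}[\tf|V]=\|\tf|V\|_2^2$ (the norm, of course, being the $L^2$-norm on the quotient). Substituting these two evaluations into the displayed identity produces exactly the assertion of the corollary.

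There is essentially no obstacle here — the argument is a pure substitution — so the only thing to watch is the bookkeeping: one must keep track of which ambient group each averaging operator refers to, applying \refe{Lam-x-y} once on $\F_q^r$ itself and once on each of the quotients $\F_q^r/V$, and reading the two norms $\|\tf\|_2^2$ and $\|\tf|V\|_2^2$ accordingly. As an optional sanity remark one could note that, since $\tf|V=\wt{f|V}$ by the observation closing Section~2, the right-hand side is the same as $\sum_V\|\wt{f|V}\|_2^2$; but this reformulation is not needed for the proof.
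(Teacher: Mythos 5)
Your proof is correct and is exactly the paper's argument: the corollary is obtained by applying Lemma~\refl{identity} to the equation $x-y=0$ and invoking \refe{Lam-x-y} on $\F_q^r$ and on each quotient $\F_q^r/V$. (The only cosmetic difference is that you apply the lemma to $\tf$ rather than to $f$; this is harmless since $\widetilde{\tf}=\tf$.)
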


Corollary \refc{norm2} can be considered as an analogue of the Parseval
identity. Next, we need a tool for the the ``cubes versus squares''
comparison.

\begin{claim}\label{c:squares-cubes}
Let $M$ be a real number, and $f$ a non-constant real-valued function on the
finite abelian group $G$. Suppose that $E$ is a linear homogeneous equation
in $k\ge 3$ variables with integer coefficients, co-prime with the order of
$G$. If either $k$ is odd and
\begin{align*}
  \Lam_{E}[\tf] &\le -M^{k-2}\|\tf\|_2^2,
\intertext{or $k$ is even and}
  \Lam_{E}[\tf] &\ge M^{k-2}\|\tf\|_2^2,
\end{align*}
then $\max_G\tf\ge M$ and, consequently,
  $$ \max_G f \ge \E f + M. $$
\end{claim}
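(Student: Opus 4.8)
The plan is to reduce the whole statement to a single elementary inequality for non-negative functions. First I would dispose of trivialities: since $\tf=f-\E f$, the second displayed conclusion, $\max_G f\ge\E f+M$, is literally the first one, $\max_G\tf\ge M$; and since $f$ is non-constant, $\tf$ is a non-zero function of zero mean, so $\max_G\tf>0$ and $\|\tf\|_2^2>0$. Hence I may assume $M>0$. The key device is then to put $\mu:=\max_G\tf$ and introduce
  $$ h:=\mu-\tf, $$
which is non-negative and satisfies $\E h=\mu$ and $h-\E h=-\tf$.

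Next I would establish two facts about $h$. The first is the identity gotten by applying \refe{Lam-alt} to $h$ in place of $f$ (legitimate, as the coefficients of $E$ are co-prime with $|G|$): since $h-\E h=-\tf$ and $\Lam_E[-\tf]=(-1)^k\Lam_E[\tf]$, it reads
  $$ (-1)^k\Lam_E[\tf]=\Lam_E[h]-\mu^k . $$
The second is the inequality
  $$ \Lam_E[h]\le(\E h)^{k-2}\|h\|_2^2=\mu^{k-2}\|h\|_2^2, $$
valid because $h\ge 0$ and $k\ge 3$. To prove it I would parametrize $\cS_G(E)$ by its first $k-1$ coordinates (the last one being then determined, all coefficients being co-prime with $|G|$) and peel off the first coordinate:
  $$ \Lam_E[h]=\E_{g_2\longc g_{k-1}\in G}\; h(g_2)\dotsb h(g_{k-1})\cdot
        \E_{g_1\in G}\,h(g_1)\,h(g_k), $$
where for fixed $g_2\longc g_{k-1}$ the last coordinate $g_k$ runs bijectively over $G$ as $g_1$ does. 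By Cauchy--Schwarz the inner average is at most $\|h\|_2^2$, and the remaining $k-2$ factors, being non-negative, average to $\E h=\mu$ apiece; multiplying out gives the bound.

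Combining the two facts, and using the variance identity $\|h\|_2^2-(\E h)^2=\|h-\E h\|_2^2=\|\tf\|_2^2$, I would get
  $$ (-1)^k\Lam_E[\tf]=\Lam_E[h]-\mu^k\le\mu^{k-2}\bigl(\|h\|_2^2-\mu^2\bigr)=\mu^{k-2}\|\tf\|_2^2 . $$
In both parity cases the hypothesis of the claim is exactly the statement $(-1)^k\Lam_E[\tf]\ge M^{k-2}\|\tf\|_2^2$; cancelling the positive factor $\|\tf\|_2^2$ leaves $M^{k-2}\le\mu^{k-2}$, and since $k-2\ge 1$ and $M,\mu>0$ this forces $M\le\mu=\max_G\tf$, which is the assertion (together with $\max_G f=\E f+\max_G\tf\ge\E f+M$).

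I expect the only real obstacle to be spotting the substitution $h=\mu-\tf$. A bound of the shape $\pm\Lam_E[\tf]\le(\max_G\tf)^{k-2}\|\tf\|_2^2$ is simply false for a general balanced $\tf$ without the sign/parity assumption --- the crude estimate $|\Lam_E[\tf]|\le\|\tf\|_\infty^{k-2}\|\tf\|_2^2$ is essentially the best one can say in general --- so the whole point is to convert the \emph{one-sided} information $\tf\le\mu$ into genuine non-negativity, which is precisely what passing to $h$ accomplishes. Once that is in place, the Cauchy--Schwarz estimate for $\Lam_E[h]$ and the bookkeeping with \refe{Lam-alt} are routine.
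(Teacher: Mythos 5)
Your proof is correct. At its core it runs on the same mechanism as the paper's: both convert the one-sided bound $\tf\le\mu$ (with $\mu=\max_G\tf$, or a strict majorant $M$ in the paper's proof by contradiction) into non-negativity, and then average a manifestly non-negative expression of the form $(\tf(g_i)-\tf(g_j))^2\cdot\prod_l(\mu-\tf(g_l))$ over $\cS_G(E)$; indeed, unwinding your Cauchy--Schwarz step shows that your inequality $\Lam_E[h]\le(\E h)^{k-2}\|h\|_2^2$ is precisely the statement $\E_{(g_1\longc g_k)\in\cS_G(E)}(h(g_1)-h(g_k))^2\,h(g_2)\dotsb h(g_{k-1})\ge0$, which is the paper's positive average with $M$ replaced by $\mu$ and the coordinates relabelled. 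The packaging, though, is genuinely different: the paper expands that average into $3\cdot2^{k-2}$ terms and checks by hand which ones vanish, whereas you isolate a reusable standalone inequality valid for arbitrary non-negative functions, prove it by Cauchy--Schwarz, and route the remaining bookkeeping through the already-established identity \refe{Lam-alt} and the variance identity $\|h\|_2^2-(\E h)^2=\|\tf\|_2^2$. Your version is more modular and avoids arguing by contradiction; the price is the extra reduction to $M>0$ (needed to pass from $M^{k-2}\le\mu^{k-2}$ to $M\le\mu$ when $k$ is even), which you correctly justify by noting that a non-zero mean-zero function has positive maximum. The two uses of non-constancy also match: you need it to ensure $\|\tf\|_2^2>0$ so the factor can be cancelled, the paper needs it to make its average strictly positive.
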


\begin{proof}
Assuming $\max_G\tf<M$, we get
  $$ \E_{(g_1\longc g_k)\in\cS_G(E)} (\tf(g_1)-\tf(g_2))^2
                                      (M-\tf(g_3))\dotsb(M-\tf(g_k)) > 0. $$
Multiplying out the brackets in the left-hand side and using linearity of the
averaging operator, we get a sum of $3\cdot2^{k-2}$ terms, of which, in view
of the coprimality assumption, not vanishing are only
  $$ 2(-1)^{k-1}\E_{(g_1\longc g_k)\in\cS_G(E)} \tf(g_1)\dotsb \tf(g_k) $$
and
  $$ M^{k-2} \E_{(g_1\longc g_k)\in\cS_G(E)}
                                          (\tf(g_i))^2;\quad i\in\{1,2\}. $$
Of these three terms the first is equal to $2(-1)^{k-1}\Lam_E[\tf]$, while
coprimality ensures that the other two are both equal to
$M^{k-2}\|\tf\|_2^2$. We conclude that
  $$ (-1)^k\Lam_E[\tf] < M^{k-2}\|\tf\|_2^2, $$
contradicting the assumptions.
\end{proof}

We notice that Claim~\refc{squares-cubes} is, in a sense, sharp: say, if
$G,E$, and $k$ are as in the claim, and the coefficients of $E$ add up to
$0$, then for any fixed element $g\in G$, writing $N:=|G|$ we have
  $$ \Lam_E[\wt1_{G\stm\{g\}}] = (-1)^kN^{-(k-2)} \|\wt1_{G\stm\{g\}}\|_2^2 $$
(both sides being equal to $(-1)^kN^{-k}(N-1)$), whereas
$\max_G\wt1_{G\stm\{g\}}=N^{-1}$.

Suppose that $q,r,E$, and $f$ are as in Lemma~\refl{identity}. Comparing
\begin{align*}
  \sum_{V<\F_q^r\colon\codim V=1} \Lam_E[\tf|V] &= \Lam_E[\tf]
\intertext{(which is the conclusion of the lemma) and}
  \sum_{V<\F_q^r\colon\codim V=1} \|\tf|V\|_2^2 &= \|\tf\|_2^2
\end{align*}
(by Corollary~\refc{norm2}), and observing that $\|\tf|V\|_2=0$ implies
$\Lam_E[\tf|V]=0$, we conclude that if $f$ is not a constant function, then
there exists a co-dimension $1$ subspace $V<\F_q^r$ with $\|\tf|V\|_2\ne0$
and
  $$ \Lam_E[\tf|V] \le \frac{\Lam_E[\tf]}{\|\tf\|_2^2}\cdot \|\tf|V\|_2^2. $$
If $E$ is an equation in three variables, then by Claim~\refc{squares-cubes}
there exists $g\in\F_q^r$ such that
  $$ \E_{g+V} f = (f|V)(g) \ge \E f - \frac{\Lam_E[\tf]}{\|\tf\|_2^2}. $$
We summarize as follows.
\begin{proposition}\label{p:density-increment}
Let $q$ be a prime power, $r\ge 1$ an integer, and $E$ a homogeneous linear
equation in three variables with the coefficients, co-prime with $q$. If $f$
is a non-constant real function on the vector space $\F_q^r$, then there
exists a co-dimension $1$ affine subspace $g+V$ (where $g\in\F_q^r$ and
$V<\F_q^r$ is a linear subspace) such that
  $$ \E_{g+V} f \ge \E f - \frac{\Lam_E[\tf]}{\|\tf\|_2^2}. $$
\end{proposition}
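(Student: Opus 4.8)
The plan is to combine Lemma~\refl{identity}, Corollary~\refc{norm2}, and Claim~\refc{squares-cubes}, essentially reproducing the short computation sketched in the paragraph preceding the proposition. Throughout I let $V$ range over the linear subspaces of $\F_q^r$ of co-dimension $1$. Since $f$ is non-constant we have $\|\tf\|_2^2>0$, so I may set $c:=\Lam_E[\tf]/\|\tf\|_2^2$; the goal is to produce a co-dimension $1$ affine subspace $g+V$ with $\E_{g+V}f\ge\E f-c$.

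First I would locate a subspace $V$ on which $\tf$ is nonzero and on which $\Lam_E$ is controlled by the squared norm. By Corollary~\refc{norm2} we have $\sum_V\|\tf|V\|_2^2=\|\tf\|_2^2>0$, so at least one term is positive. If, contrary to what is wanted, every $V$ with $\|\tf|V\|_2\ne0$ satisfied $\Lam_E[\tf|V]>c\,\|\tf|V\|_2^2$, then---the remaining subspaces contributing $\Lam_E[\tf|V]=0=c\,\|\tf|V\|_2^2$---summing over all $V$ and invoking Lemma~\refl{identity} together with Corollary~\refc{norm2} would give
  $$ \Lam_E[\tf]=\sum_V\Lam_E[\tf|V]>c\sum_V\|\tf|V\|_2^2=c\,\|\tf\|_2^2=\Lam_E[\tf], $$
a contradiction. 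Hence there is a co-dimension $1$ subspace $V$ with $\|\tf|V\|_2\ne0$ and $\Lam_E[\tf|V]\le c\,\|\tf|V\|_2^2$.

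Next I would feed this $V$ into Claim~\refc{squares-cubes}, applied to the function $f|V$ on the quotient group $\F_q^r/V$, which has order $q$ and hence is coprime with every coefficient of $E$. Since $\widetilde{f|V}=\tf|V$ has nonzero $L^2$-norm, $f|V$ is non-constant, so the claim applies with $k=3$ and $M:=-c$: in the odd case $k=3$ its hypothesis reads $\Lam_E[\widetilde{f|V}]\le -M\,\|\widetilde{f|V}\|_2^2$, which is exactly the inequality $\Lam_E[\tf|V]\le c\,\|\tf|V\|_2^2$ just obtained. (No sign condition on $M$ intervenes: if $M\le 0$ the conclusion $\max(\widetilde{f|V})\ge M$ follows at once from $\E(\widetilde{f|V})=0$.) The claim thus gives $\max(f|V)\ge\E(f|V)+M=\E f-c$.

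Finally I would translate this back: choosing $g\in\F_q^r$ so that the coset $g+V$ realizes the maximum of $f|V$, and recalling $(f|V)(g+V)=\E_{g+V}f$ and $\E(f|V)=\E f$, I obtain
  $$ \E_{g+V}f\ge\E f-\frac{\Lam_E[\tf]}{\|\tf\|_2^2}, $$
as asserted. I do not anticipate a genuine obstacle here; the assembly is routine, and the only step requiring a little care is the contradiction argument, where the strict inequality is legitimate precisely because at least one $\|\tf|V\|_2^2$ is positive, and where one must remember that the degenerate subspaces (those with $\|\tf|V\|_2=0$) contribute zero to both sides, and that $f|V$ is genuinely non-constant by virtue of $\tf|V=\widetilde{f|V}$ and $\|\tf|V\|_2\ne0$.
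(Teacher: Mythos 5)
Your proof is correct and follows essentially the same route as the paper: an averaging argument over the co-dimension $1$ subspaces via Lemma~\refl{identity} and Corollary~\refc{norm2} to find a good $V$, followed by Claim~\refc{squares-cubes} with $k=3$ and $M=-\Lam_E[\tf]/\|\tf\|_2^2$ applied to $f|V$ on the quotient $\F_q^r/V$. The extra care you take with the strictness of the contradiction and with the degenerate subspaces is exactly right and matches the paper's (more tersely stated) reasoning.
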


We are now in a position to complete the proof of Theorem \reft{main}, and
this is where the property of being progression-free comes into play.

Suppose that $G$ is a finite abelian group and $A\seq G$ is a subset of
density $\alp:=\E1_A$. By \refe{Lam-x-y} and \refe{Lam-alt}, we have
  $$ \|\wt1_A\|_2^2 = \Lam_{x-y=0}[\wt1_A] = \E1_A^2 - (\E1_A)^2
                                                            = \alp(1-\alp). $$
If $N:=|G|$ is odd and $A$ is progression-free, then
$\Lam_{x-2y+z=0}[1_A]=N^{-1}\alp$ by the definition of the operator $\Lam_E$;
hence \refe{Lam-alt} gives
  $$ \Lam_{x-2y+z=0}[\wt1_A] = -\alp(\alp^2-N^{-1}). $$
Thus,
  $$ \E1_A - \frac{\Lam_{x-2y+z}[\wt1_A]}{\|\wt1_A\|_2^2} =
         \alp + \frac{\alp^2-N^{-1}}{1-\alp} = \frac{\alp-N^{-1}}{1-\alp} $$
and Theorem \reft{main} follows from this equality and Proposition
\refp{density-increment}.

\vfill

\end{document}